\documentclass[12pt]{article}
\oddsidemargin 0 mm
\topmargin -10 mm
\headheight 0 mm
\headsep 0 mm 
\textheight 246.2 mm
\textwidth 159.2 mm
\footskip 9 mm
\setlength{\parindent}{0pt}
\setlength{\parskip}{5pt plus 2pt minus 1pt}
\pagestyle{plain}
\usepackage{amssymb}
\usepackage{amsthm}
\usepackage{amsmath}
\usepackage{graphicx}
\usepackage{enumerate}

\newtheorem{theorem}{Theorem}

\newtheorem{lemma}[theorem]{Lemma}
\newtheorem{proposition}[theorem]{Proposition}
\newtheorem{remark}[theorem]{Remark}
\newtheorem{example}[theorem]{Example}

\title{Compositions and decompositions of binary relations}
\author{Ivan~Chajda and Helmut~L\"anger}
\date{}
\begin{document}

\footnotetext{Support of the research of the first two authors by the Austrian Science Fund (FWF), project I~4579-N, and the Czech Science Foundation (GA\v CR), project 20-09869L, entitled ``The many facets of orthomodularity'', as well as by \"OAD, project CZ~02/2019, entitled ``Function algebras and ordered structures related to logic and data fusion'', and, concerning the first author, by IGA, project P\v rF~2021~030, is gratefully acknowledged.}

\maketitle

\begin{abstract}
It is well-known that to every binary relation on a non-void set $I$ there can be assigned its incidence matrix, also in the case when $I$ is infinite. We show that a certain kind of ``multiplication'' of such incidence matrices corresponds to the composition of the corresponding relations. Using this fact we investigate the solvability of the equation $R\circ X=S$ for given binary relations $R$ and $S$ on $I$ and derive an algorithm for solving this equation by using the connections between the corresponding incidence matrices. Moreover, we describe how one can obtain the incidence matrix of a product of binary relations from the incidence matrices of its factors.
\end{abstract}

{\bf AMS Subject Classification:} 08A02, 08A05

{\bf Keywords:} binary relation, incidence matrix, composition of incidence matrices, decomposition of binary relations, solving of relational equations

A systematic study of binary relations is a rather old task initiated in papers by J.~Riguet (\cite R) and R.~Fraiss\'e, see e.g.\ \cite{F53} and \cite{F54}. An algebraic approach to binary relations was introduced and developed by B.~J\'onsson (\cite J). An approach via assigned groupoids was started by the authors in the relatively recent papers \cite{CL13} and \cite{CL16} and, together with P.~\v Sev\v c\'ik, in \cite{CLS}.

The aim of the present paper is to show how the incidence matrices of given binary relations are useful for constructing relational products and decomposing a given relation into a relational product of two relations where one factor is given. As a byproduct we describe the incidence matrix of the Cartesian product of a set of given binary relations.

In the following let $I$ be a set. Then the Kronecker delta $\delta_{ij}$ on $I$ is defined by
\[
\delta_{ij}:=\left\{
\begin{array}{ll}
1 & \text{if }i=j, \\
0 & \text{otherwise}
\end{array}
\right.
\]
for all $i,j\in I$. Let $L$ be a further set. By an $I\times I$-matrix $M=[a_{ij}]$ over $L$ we mean a mapping $(i,j)\mapsto a_{ij}$ from $I\times I$ to $L$. If $I$ is finite we assume $I=\{1,\ldots,n\}$ and call the matrix an $n\times n$-matrix over $L$. Let $L^{I\times I}$, respectively $L^{n\times n}$, denote the set of all $I\times I$-matrices, respectively $n\times n$-matrices, over $L$.

To every binary relation $R$ on $I$ we assign its {\em incidence matrix} $M_R=[a_{ij}]\in\{0,1\}^{I\times I}$ as follows:
\[
a_{ij}:=\left\{
\begin{array}{ll}
1 & \text{if }(i,j)\in R, \\
0 & \text{otherwise}.
\end{array}
\right.
\]
For $I\times I$-matrices $A=[a_{ij}]$ and $B=[b_{ij}]$ over $\{0,1\}$ let $A\odot B$ denote the $I\times I$-matrix $C=[c_{ij}]$ over $\{0,1\}$ defined by
\[
c_{ij}:=\max_{k\in I}a_{ik}b_{kj}
\]
for all $i,j\in I$, i.e.\ $A\odot B$ is analogously defined as the usual matrix product $A\cdot B$, only the addition operation is replaced by the maximum operation (which works also for infinite $I$). It means that we ``multiply'' the $i$-th row of $A$ with the $j$-th column of $B$ using this kind of ``addition''.

At first, we will study the composition of binary relations and their incidence matrices. It was already mentioned in \cite R that a certain composition of incidence matrices corresponds to the product of the corresponding relations. However, an explicit form of such a composition was not presented. We can state and prove the following elementary result.

\begin{proposition}\label{prop1}
Let $R,S\subseteq I\times I$. Then $M_{R\circ S}=M_R\odot M_S$.
\end{proposition}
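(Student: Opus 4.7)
The plan is to verify the matrix equality entry by entry, which essentially unravels the definitions of $\circ$, $M_R$, and $\odot$ in parallel and observes that they match.

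First I would fix arbitrary $i,j\in I$ and name the entries: write $M_R=[a_{ik}]$, $M_S=[b_{kj}]$, $M_{R\circ S}=[c_{ij}]$, and $M_R\odot M_S=[d_{ij}]$, so that $d_{ij}=\max_{k\in I}a_{ik}b_{kj}$ by the definition of $\odot$. Since all factors lie in $\{0,1\}$, the product $a_{ik}b_{kj}$ equals $1$ precisely when $a_{ik}=b_{kj}=1$, i.e.\ when $(i,k)\in R$ and $(k,j)\in S$; otherwise it equals $0$. Hence the maximum over $k$ is $1$ if and only if there exists some $k\in I$ with $(i,k)\in R$ and $(k,j)\in S$, and is $0$ otherwise.

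Next I would compare this to the definition of $c_{ij}$: by definition of $M_{R\circ S}$, one has $c_{ij}=1$ iff $(i,j)\in R\circ S$, and by the definition of the relational composition $R\circ S$, this holds iff there exists $k\in I$ with $(i,k)\in R$ and $(k,j)\in S$. The two characterizations coincide, so $c_{ij}=d_{ij}$. Since $i,j$ were arbitrary, the matrices agree entrywise and the proposition follows.

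There is no real obstacle here; the argument is a routine translation of definitions, and the only mild subtlety is that the ``addition'' is replaced by $\max$, which still behaves correctly over arbitrary (possibly infinite) $I$ because $\max$ of a family of $\{0,1\}$-values is simply the indicator of whether some value in the family equals $1$. This is precisely what allows the equivalence ``$\max_k a_{ik}b_{kj}=1$ iff $\exists k$ with $a_{ik}b_{kj}=1$'' to hold without any finiteness assumption, which is the one point I would explicitly note.
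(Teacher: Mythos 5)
Your proof is correct and follows essentially the same route as the paper's: fix an entry, unwind the definitions of $\circ$, $M_R$, and $\odot$, and match the two characterizations of when the entry equals $1$. The explicit remark about $\max$ over a possibly infinite family of $\{0,1\}$-values is a nice touch but does not change the argument.
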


\begin{proof}
Put $M_R=[a_{ij}]$, $M_S=[b_{ij}]$, $M_{R\circ S}=[c_{ij}]$ and $M_R\odot M_s=[d_{ij}]$ and let $k,l\in I$. Then the following are equivalent:
\begin{align*}
                                      c_{kl} & =1, \\
                                       (k,l) & \in R\circ S, \\
\text{there exists some }m\in I\text{ with } & (k,m)\in R\text{ and }(m,l)\in S, \\
\text{there exists some }m\in I\text{ with } & a_{km}=b_{ml}=1, \\
                   \max_{m\in I}a_{km}b_{ml} & =1, \\
                                      d_{kl} & =1.
\end{align*}
This shows $M_{R\circ S}=M_R\odot M_S$.
\end{proof}

For $I\times I$-matrices $A=[a_{ij}]$ and $B=[b_{ij}]$ over $\{0,1\}$ let $A\oplus B$ denote the $I\times I$-matrix $C=[c_{ij}]$ over $\{0,1\}$ defined by
\[
c_{ij}:=\max(a_{ij},b_{ij})
\]
for all $i,j\in I$. Moreover, let $M_0$ and $M_1$ denote the $I\times I$-matrices $[0]$ and $[\delta_{ij}]$ over $\{0,1\}$ and put $\Delta:=\{(x,x)\mid x\in L\}$.

With the knowledge how to compose incidence matrices at hand, we can describe an algebraic structure on the set of all incidence matrices of a given dimension. Let us note that the structure of the set of binary relations on a given set with respect to relational operations (product, union, complementation etc.) was originally described by B.~J\'onsson, see e.g.\ \cite J and references therein.

Recall that a {\em unitary semiring} is an algebra $(S,+,\cdot,0,1)$ of type $(2,2,0,0)$ satisfying the following conditions:
\begin{itemize}
\item $(S,+,0)$ is a commutative monoid,
\item $(S,\cdot,1)$ is a monoid,
\item $(x+y)z\approx xz+yz$ and $z(x+y)\approx zx+zy$,
\item $x0\approx0x\approx0$.
\end{itemize}

\begin{theorem}\label{th1}
Let $I$ be a set. Then
\begin{enumerate}[{\rm(i)}]
\item $(2^{I\times I},\cup,\circ,\emptyset,\Delta)$ is a unitary semiring,
\item the mapping $R\mapsto M_R$ from $2^{I\times I}$ to $\{0,1\}^{I\times I}$ is an isomorphism from $(2^{I\times I},\cup,\circ,\emptyset,$ $\Delta)$ to $(\{0,1\}^{I\times I},\oplus,\odot,M_0,M_1)$ and hence the latter algebra is a unitary semiring, too.
\end{enumerate}
\end{theorem}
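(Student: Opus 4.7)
My plan is to prove (i) by a direct check of the semiring axioms for $(2^{I\times I},\cup,\circ,\emptyset,\Delta)$, and then to deduce (ii) by verifying that $R\mapsto M_R$ is a bijection sending $\cup,\circ,\emptyset,\Delta$ to $\oplus,\odot,M_0,M_1$, respectively. Because the semiring axioms are equational, transport of structure along this bijection will automatically yield the final assertion of (ii), namely that the matrix algebra is also a unitary semiring, so no separate verification of the matrix-side laws is needed.

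For (i), commutativity and associativity of $\cup$ with identity $\emptyset$ are standard. Associativity of $\circ$ unfolds to the observation that both $(k,l)\in(R\circ S)\circ T$ and $(k,l)\in R\circ(S\circ T)$ are equivalent to $\exists m,n\,((k,m)\in R\wedge(m,n)\in S\wedge(n,l)\in T)$. The identity law $R\circ\Delta=\Delta\circ R=R$ holds because the mediating element is forced to equal $l$ (respectively $k$), and the absorption law $R\circ\emptyset=\emptyset\circ R=\emptyset$ is immediate since no intermediate element can belong to $\emptyset$ on the required side. The only axiom that requires more than trivial rewriting is distributivity of $\circ$ over $\cup$, which reduces to distributing an existential quantifier over a disjunction: $(k,l)\in(R\cup S)\circ T$ iff $\exists m\,((k,m)\in R\cup S\wedge(m,l)\in T)$ iff $\exists m\,((k,m)\in R\wedge(m,l)\in T)\vee\exists m\,((k,m)\in S\wedge(m,l)\in T)$ iff $(k,l)\in(R\circ T)\cup(S\circ T)$, and the other side is symmetric.

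For (ii), the map $R\mapsto M_R$ is visibly bijective with inverse $[a_{ij}]\mapsto\{(i,j)\mid a_{ij}=1\}$. The equalities $M_\emptyset=M_0$ and $M_\Delta=M_1$ are immediate from the definitions of the incidence matrix and the Kronecker delta. Preservation of composition, $M_{R\circ S}=M_R\odot M_S$, is exactly Proposition~\ref{prop1}. For preservation of union, the $(i,j)$-entry of both $M_{R\cup S}$ and $M_R\oplus M_S$ equals $1$ if and only if $(i,j)\in R$ or $(i,j)\in S$, so $M_{R\cup S}=M_R\oplus M_S$. Combining these four compatibilities with bijectivity shows that $R\mapsto M_R$ is an isomorphism in the signature $(2,2,0,0)$, so the matrix algebra inherits the semiring axioms already verified in (i).

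The proof involves no deep ideas, only a sequence of routine set-theoretic equivalences; the main obstacle is simply the bookkeeping of the several compatibility checks. The one step where a false move is most likely to slip in is the distributivity of $\circ$ over $\cup$ in (i), where one must be careful to allow the existential witness $m$ to depend on which disjunct of $R\cup S$ is chosen.
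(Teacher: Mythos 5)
Your proposal is correct and follows essentially the same route as the paper: a direct verification of the semiring axioms for the relational algebra, followed by checking that $R\mapsto M_R$ is a bijective homomorphism (using Proposition~\ref{prop1} for $\circ$ versus $\odot$) and transporting the structure. You merely spell out the steps the paper dismisses as ``obvious'' or ``easily verified,'' such as associativity of $\circ$ and the distributivity laws.
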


\begin{proof}
Let $R,S\subseteq I\times I$, $M_R=[a_{ij}]$, $M_S=[b_{ij}]$, $M_{R\cup S}=[c_{ij}]$, $M_R\oplus M_S=[d_{ij}]$ and $k,l\in I$.
\begin{enumerate}[(i)]
\item Obviously, $(2^{I\times I},\cup,\emptyset)$ is a commutative monoid, $(2^{I\times I},\circ,\Delta)$ a monoid and $R\circ\emptyset=\emptyset\circ R=\emptyset$. The distributivity laws can be easily verified.
\item The following are equivalent:
\begin{align*}
             c_{kl} & =1, \\
              (k,l) & \in R\cup S, \\
         (k,l)\in R & \text{ or }(k,l)\in S, \\
           a_{kl}=1 & \text{ or }b_{kl}=1, \\
\max(a_{kl},b_{kl}) & =1, \\
             d_{kl} & =1
\end{align*}
showing $M_{R\cup S}=M_R\oplus M_S$. From Proposition~\ref{prop1} we know that $M_{R\circ S}=M_R\odot M_S$. Since, finally, $M_\emptyset=M_0$ and $M_\Delta=M_1$, the mapping $R\mapsto M_R$ from $2^{I\times I}$ to $\{0,1\}^{I\times I}$ is a homomorphism from $(2^{I\times I},\cup,\circ,\emptyset,\Delta)$ to $(\{0,1\}^{I\times I},\oplus,\odot,M_0,M_1)$. Since $f$ is bijective, it is an isomorphism. The last assertion is clear.
\end{enumerate}
\end{proof}

\begin{remark}
From Theorem~\ref{th1} we conclude that $\odot$ is associative and distributive with respect to $\oplus$.
\end{remark}

Now we turn our attention to the decomposition of binary relations. Consider two binary relations $R$ and $S$ on a given set $A$. We ask if there exists a binary relation $X$ on $A$ satisfying the equation
\[
R\circ X=S,
\]
i.e.\ we ask if $S$ can be decomposed into the given relation $R$ and a certain (unknown) relation $X$. At first we present an example showing that such a relation $X$ can be found by using the composition of incidence matrices presented above. It is a method similar to solving sets of linear equations over the two-element field but instead of the binary addition we use the binary operation $\max$ as explained before.

\begin{example}\label{ex1}
Put
\begin{align*}
n & :=3, \\
R & :=\{(1,2),(2,1),(3,2),(3,3)\}, \\
S & :=\{(1,1),(1,2),(3,1),(3,2)\}.
\end{align*}
We consider the equation $R\circ X=S$. This is equivalent to $M_R\odot M_X=M_S$ where
\[
M_R=\left(
\begin{array}{ccc}
0 & 1 & 0 \\
1 & 0 & 0 \\
0 & 1 & 1
\end{array}
\right), M_X=\left(
\begin{array}{ccc}
x_{11} & x_{12} & x_{13} \\
x_{21} & x_{22} & x_{23} \\
x_{31} & x_{32} & x_{33}
\end{array}
\right)\text{ and }M_S= \left(
\begin{array}{ccc}
1 & 1 & 0 \\
0 & 0 & 0 \\
1 & 1 & 0
\end{array}
\right).
\]
We obtain immediately
\[
(x_{21},x_{22},x_{23},x_{11},x_{12},x_{13})=(1,1,0,0,0,0)
\]
and, using our computation,
\begin{align*}
\max(x_{21},x_{31}) & =1, \\
\max(x_{22},x_{32}) & =1, \\
\max(x_{23},x_{33}) & =0.
\end{align*}
The last three equations are equivalent to
\begin{align*}
\max(1,x_{31}) & =1, \\
\max(1,x_{32}) & =1, \\
\max(0,x_{33}) & =0
\end{align*}
and hence to $x_{33}=0$. This shows that the equation $R\circ X=S$ has exactly four solutions, namely
\begin{align*}
X & =\{(2,1),(2,2)\}, \\
X & =\{(2,1),(2,2),(3,1)\}, \\
X & =\{(2,1),(2,2),(3,2)\}, \\
X & =\{(2,1),(2,2),(3,1),(3,2)\}.
\end{align*}
\end{example}

Next we show how the incidence matrix of the Cartesian product of binary relations over different base sets can be derived from the incidence matrices of the corresponding factors. For this we introduce the following kind of product of relations over different base sets.

Let $(I_k)_{k\in K}$ be a family of sets, put $I:=\prod\limits_{k\in K}I_k$ and for all $k\in K$ let $R_k\subseteq I_k\times I_k$. Define
\[
\prod_{k\in K}R_k:=\{\big((i_k)_{k\in K},(j_k)_{k\in K}\big)\in I\times I\mid(i_k,j_k)\in R_k\text{ for all }k\in K\}.
\]

\begin{theorem}
Let $(I_k)_{k\in K}$ be a family of sets, and for all $k\in K$ let $R_k\subseteq I_k\times I_k$ and $M_{R_k}=[a_{i_kj_k}]$. Put $I:=\prod\limits_{k\in K}I_k$ and $R:=\prod\limits_{k\in K}R_k$. Then $R\subseteq I\times I$. Let $M_R=[a_{ij}]$. Then
\[
a_{(i_k)_{k\in K}(j_k)_{k\in K}}=\min_{k\in K}a_{i_kj_k}
\]
for all $(i_k)_{k\in K},(j_k)_{k\in K}\in I$.
\end{theorem}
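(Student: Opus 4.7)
The plan is to imitate the style of the proof of Proposition~\ref{prop1}: fix an arbitrary entry of $M_R$ and write down a chain of equivalences that reduces membership in $R$ to a statement about the entries of the matrices $M_{R_k}$, and ultimately to a statement about their minimum. Since all entries live in $\{0,1\}$, it suffices to characterize when the entry equals $1$; the complementary case $0$ is then automatic.

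First I would observe that $R\subseteq I\times I$ follows immediately from the definition of $\prod_{k\in K}R_k$, so that $M_R$ is indeed an $I\times I$-matrix over $\{0,1\}$. Then I would fix arbitrary elements $(i_k)_{k\in K}, (j_k)_{k\in K}\in I$ and exhibit the equivalence chain
\begin{align*}
a_{(i_k)_{k\in K}(j_k)_{k\in K}} &= 1, \\
\bigl((i_k)_{k\in K},(j_k)_{k\in K}\bigr) &\in R, \\
(i_k,j_k)\in R_k &\text{ for all }k\in K, \\
a_{i_kj_k}=1 &\text{ for all }k\in K, \\
\min_{k\in K}a_{i_kj_k} &= 1.
\end{align*}
The first two lines are the definition of the incidence matrix $M_R$; the second and third lines are equivalent by the definition of $\prod_{k\in K}R_k$; the third and fourth are equivalent by the definition of each $M_{R_k}$; and the last equivalence uses that the entries $a_{i_kj_k}$ lie in $\{0,1\}$, so the infimum of the family equals $1$ precisely when every member equals $1$ (this is what makes $\min$ well-defined even for infinite $K$, as noted for $\max$ in the paper). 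Since an entry in $\{0,1\}$ equals $0$ iff it does not equal $1$, this equivalence yields the claimed formula.

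There is essentially no obstacle: the argument is a pure unfolding of definitions. The only point worth being explicit about is the reduction of the universal quantifier ``for all $k\in K$'' to the single value $\min_{k\in K}a_{i_kj_k}$, which rests on the Boolean character of the entries and works equally well for finite and infinite index sets $K$, in parallel with the use of $\max$ in the definition of $\odot$.
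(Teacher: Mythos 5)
Your proposal is correct and follows essentially the same route as the paper's own proof: the paper likewise fixes $(l_k)_{k\in K},(m_k)_{k\in K}\in I$ and runs the identical chain of equivalences from $a_{(l_k)_{k\in K}(m_k)_{k\in K}}=1$ down to $\min_{k\in K}a_{l_km_k}=1$. Your extra remarks on the $\{0,1\}$-valued entries and infinite $K$ are sound, if slightly more explicit than the paper.
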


\begin{proof}
Let $(l_k)_{k\in K},(m_k)_{k\in K}\in I$. Then the following are equivalent:
\begin{align*}
a_{(l_k)_{k\in K}(m_k)_{k\in K}} & =1, \\
\big((l_k)_{k\in K},(m_k)_{k\in K}\big) & \in R, \\
(l_k,m_k) & \in R_k\text{ for all }k\in K, \\
a_{l_km_k} & =1\text{ for all }k\in K, \\
\min_{k\in K}a_{l_km_k} & =1.
\end{align*}
This shows
\[
a_{(l_k)_{k\in K}(m_k)_{k\in K}}=\min_{k\in K}a_{l_km_k}.
\]
\end{proof}

\begin{example}
If
\begin{align*}
I & :=\{1,2\}, \\
J & :=\{1,2,3\}, \\
R & :=\{(1,1),(2,1)\}\subseteq I\times I, \\
S & :=\{(3,2),(3,3)\}\subseteq J\times J
\end{align*}
then
\begin{align*}
M_R & =\left(
\begin{array}{cc}
1 & 0 \\
1 & 0
\end{array}
\right), \\
M_S & =\left(
\begin{array}{ccc}
0 & 0 & 0 \\
0 & 0 & 0 \\
0 & 1 & 1
\end{array}
\right), \\
  K & :=I\times J=\{(1,1),(1,2),(1,3),(2,1),(2,2),(2,3)\}, \\
  T & :=R\times S=\{\big((1,3),(1,2)\big),\big((1,3),(1,3)\big),\big((2,3),(1,2)\big),\big((2,3),(1,3)\big)\}\subseteq K\times K, \\
M_T & =\left(
\begin{array}{cccccc}
0 & 0 & 0 & 0 & 0 & 0 \\
0 & 0 & 0 & 0 & 0 & 0 \\
0 & 1 & 1 & 0 & 0 & 0 \\
0 & 0 & 0 & 0 & 0 & 0 \\
0 & 0 & 0 & 0 & 0 & 0 \\
0 & 1 & 1 & 0 & 0 & 0
\end{array}
\right).
\end{align*}
\end{example}

In the next theorem we present sufficient but not necessary conditions for solving the equation $R\circ X=S$. That these conditions are not necessary can be seen by the fact that the mapping $f$ mentioned in Theorem~\ref{th1} does not exist in Example~\ref{ex1} though the equation $R\circ X=S$ is solvable.

\begin{theorem}\label{th2}
Let $R,S\subseteq I\times I$, $f\colon I\rightarrow I$ and assume $M_R=[\delta_{j,f(i)}]$ and $M_S=[b_{ij}]$.
\begin{enumerate}[{\rm(i)}]
\item The equation $R\circ X=S$ has a solution if and only if
\[
\text{for all }j,k,l\in I\text{ we have }b_{kj}=b_{lj}\text{ whenever }f(k)=f(l).
\]
In this case $X$ with $M_X=[x_{ij}]$ is a solution if and only if
\[
\text{for all }i,j\in I\text{ we have }x_{f(i),j}=b_{ij}.
\]
\item If $f$ is bijective then the equation $R\circ X=S$ has exactly one solution, namely $X$ with $M_X=[b_{f^{-1}(i),j}]$.
\end{enumerate}
\end{theorem}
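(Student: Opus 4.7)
The plan is to reduce everything to a single elementary matrix computation, namely evaluating $M_R \odot M_X$ in the special case $M_R = [\delta_{j,f(i)}]$. By Proposition~\ref{prop1} the equation $R\circ X = S$ is equivalent to $M_R\odot M_X = M_S$, so first I would compute, for arbitrary $i,j\in I$,
\[
(M_R\odot M_X)_{ij} \;=\; \max_{k\in I}\delta_{k,f(i)}\,x_{kj} \;=\; x_{f(i),j},
\]
since the only index $k$ for which $\delta_{k,f(i)}\ne 0$ is $k=f(i)$. Thus $R\circ X = S$ holds if and only if $x_{f(i),j}=b_{ij}$ for all $i,j\in I$. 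This already delivers the "in this case" characterization in part~(i) once existence is settled, and it is the engine driving both remaining claims.

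For the necessity in part~(i), I would simply observe that if $X$ is any solution and $f(k)=f(l)$, then the previous identity gives $b_{kj}=x_{f(k),j}=x_{f(l),j}=b_{lj}$ for every $j\in I$. For sufficiency, I would \emph{construct} a solution assuming the stated compatibility condition. The idea is to define $x_{i',j}$ according to whether $i'$ lies in the image of $f$: if $i'=f(k)$ for some $k$, set $x_{i',j}:=b_{kj}$, which is well defined by the hypothesis that $b_{kj}=b_{lj}$ whenever $f(k)=f(l)$; for $i'\notin f(I)$, choose $x_{i',j}$ arbitrarily (say $0$). Then by the computation above, $(M_R\odot M_X)_{ij}=x_{f(i),j}=b_{ij}$, so $X$ is indeed a solution.

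For part~(ii), I would observe that when $f$ is bijective, every index $i'\in I$ is of the form $f(i)$ for the unique $i=f^{-1}(i')$. Hence the equation $x_{f(i),j}=b_{ij}$ determines $x_{i',j}$ completely, namely $x_{i',j}=b_{f^{-1}(i'),j}$, so $M_X=[b_{f^{-1}(i),j}]$ is forced, and conversely this choice is a solution by part~(i) (the compatibility condition is vacuous for an injective $f$).

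The argument is essentially routine once the key identity $(M_R\odot M_X)_{ij}=x_{f(i),j}$ is isolated; the only mild subtlety is to handle the indices $i'$ outside the image of $f$ when building a solution in part~(i), which is why the solutions need not be unique in general even when they exist. No new ideas beyond Proposition~\ref{prop1} and the definition of $\odot$ are required.
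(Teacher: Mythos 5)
Your proposal is correct and follows essentially the same route as the paper: both hinge on the single identity $(M_R\odot M_X)_{ij}=\max_{k\in I}\delta_{k,f(i)}x_{kj}=x_{f(i),j}$, reducing $R\circ X=S$ to $x_{f(i),j}=b_{ij}$ for all $i,j$. If anything, you are more careful than the paper, which leaves the solvability criterion (necessity and sufficiency of the compatibility condition, including the choice of $x_{i',j}$ for $i'$ outside the image of $f$) implicit, whereas you spell it out.
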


\begin{proof}
Let $X\subseteq I\times I$ and $M_X=[x_{ij}]$.
\begin{enumerate}[(i)]
\item Then the following are equivalent:
\begin{align*}
                          R\circ X & =S, \\
                      M_R\odot M_X & =M_S, \\
\max_{k\in I}\delta_{k,f(i)}x_{kj} & =b_{ij}\text{ for all }i,j\in I, \\
                        x_{f(i),j} & =b_{ij}\text{ for all }i,j\in I.
\end{align*}
\item If $f$ is bijective then the following are equivalent:
\begin{align*}
x_{f(i),j} & =b_{ij}\text{ for all }i,j\in I, \\
    x_{ij} & =b_{f^{-1}(i),j}\text{ for all }i,j\in I.
\end{align*}
\end{enumerate}
\end{proof}

How the mapping from Theorem~\ref{th2} works is illustrated in the following example.

\begin{example}
Put
\begin{align*}
I & :=\{1,2,3\}, \\
R & :=\{(1,2),(2,3),(3,1)\}, \\
S & :=\{(1,1),(1,2),(2,3),(3,3)\}.
\end{align*}
Then $M_R=\left(
\begin{array}{ccc}
0 & 1 & 0 \\
0 & 0 & 1 \\
1 & 0 & 0
\end{array}
\right)$ and $M_S=\left(
\begin{array}{ccc}
1 & 1 & 0 \\
0 & 0 & 1 \\
0 & 0 & 1
\end{array}
\right)$. There is only one possibility for the mapping f, namely
\begin{align*}
               \big(f(1),f(2),f(3)\big) & =(2,3,1), \\
\big(f^{-1}(1),f^{-1}(2),f^{-1}(3)\big) & =(3,1,2).
\end{align*}
Since $f$ is a bijection, the equation $R\circ X=S$ has the unique solution $X$ with $M_X=\left(
\begin{array}{ccc}
0 & 0 & 1 \\
1 & 1 & 0 \\
0 & 0 & 1
\end{array}
\right)$, i.e.\ $X=\{(1,3),(2,1),(2,2),(3,3)\}$.
\end{example}

We are going to show several cases in which the equation $R\circ X=S$ is not solvable.

\begin{lemma}
Let $R,S\subseteq I\times I$, $M_R=[a_{ij}]$, $M_S=[b_{ij}]$ and $k,l,m,n\in I$ and assume that one of the following conditions holds:
\begin{enumerate}[{\rm(i)}]
\item $a_{kj}=0$ for all $j\in I$ and there exists some $p\in I$ with $b_{kp}\neq0$,
\item $a_{kj}=a_{lj}$ for all $j\in I$ and there exists some $p\in I$ with $b_{kp}\neq b_{lp}$,
\item $a_{kj}=\delta_{jl}$ for all $j\in I$, $a_{nl}=b_{km}=1$ and $b_{nm}=0$.
\end{enumerate}
Then the equation $R\circ X=S$ has no solution.
\end{lemma}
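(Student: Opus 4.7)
The plan is to argue each of the three cases by contradiction, using Proposition~\ref{prop1} to reduce $R\circ X=S$ to the entrywise identity $\max_{j\in I}a_{ij}x_{jm}=b_{im}$ for all $i,m\in I$, and then exhibiting a single index pair at which this identity must fail.

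For case (i), I would observe that whatever $X$ is chosen, the $k$-th row of $M_R\odot M_X$ is identically zero, because $\max_{j\in I}a_{kj}x_{jp}=\max_{j\in I}0\cdot x_{jp}=0$ for every $p$. This contradicts the assumption that $b_{kp}\neq0$ for some $p$. For case (ii), the same computation shows that whenever rows $k$ and $l$ of $M_R$ agree, the $k$-th and $l$-th rows of $M_R\odot M_X$ must also agree for any $X$; but by hypothesis these rows of $M_S$ differ at column $p$, again a contradiction.

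Case (iii) is the only one requiring a short chain of reasoning, and I expect this to be the only slightly delicate point. Here the row $a_{kj}=\delta_{jl}$ is a ``selector'' row, so for any candidate $X$ the product collapses to $(M_R\odot M_X)_{km}=\max_{j\in I}\delta_{jl}x_{jm}=x_{lm}$. Matching $b_{km}=1$ forces $x_{lm}=1$. Plugging this into the $n$-th row and using $a_{nl}=1$, I get
\[
(M_R\odot M_X)_{nm}=\max_{j\in I}a_{nj}x_{jm}\ge a_{nl}x_{lm}=1,
\]
so $b_{nm}$ would have to be $1$, contradicting $b_{nm}=0$.

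The only real obstacle is keeping the indices straight in case (iii); otherwise each case is a one-line calculation that exploits the fact that $\odot$ reads off one row of $M_R$ at a time, so constraints imposed by $M_R$ (a zero row, two equal rows, or a row with a single $1$) propagate directly to constraints on the rows of $M_R\odot M_X$, which must then be compatible with $M_S$.
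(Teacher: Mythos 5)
Your proposal is correct and follows essentially the same argument as the paper: in each case you assume a solution $X$, use Proposition~\ref{prop1} to pass to the entrywise identity, and derive the same contradictions (zero row, equal rows, and in case (iii) the chain $x_{lm}=b_{km}=1$ followed by $1=a_{nl}x_{lm}\leq\max_{j\in I}a_{nj}x_{jm}=b_{nm}=0$). Nothing further is needed.
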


\begin{proof}
Assume $X$ to be a solution with $M_X=[x_{ij}]$. Then $M_R\odot M_X=M_S$. Now we have
\begin{enumerate}[(i)]
\item $b_{kp}=\max\limits_{r\in I}a_{kr}x_{rp}=0$, a contradiction.
\item $b_{kp}=\max\limits_{r\in I}a_{kr}x_{rp}=\max\limits_{r\in I}a_{lr}x_{rp}=b_{lp}$, a contradiction.
\item $x_{lm}=\max\limits_{j\in I}\delta_{jl}x_{jm}=\max\limits_{j\in I}a_{kj}x_{jm}=b_{km}=1$ which implies
\[
1=a_{nl}x_{lm}\leq\max_{j\in I}a_{nj}x_{jm}=b_{nm}=0,
\]
a contradiction.
\end{enumerate}
\end{proof}

In the following proposition we present a case where the equation $R\circ X=S$ can be easily solved.

\begin{proposition}
Let $R,S\subseteq I\times I$ with $R\subseteq S$ and assume $R$ to be reflexive and $S$ to be transitive. Then the equation $R\circ X=S$ has a solution, namely $X=\Delta\cup(S\setminus R)$.
\end{proposition}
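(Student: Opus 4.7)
The plan is to verify the proposed solution $X=\Delta\cup(S\setminus R)$ directly by showing both inclusions $R\circ X\subseteq S$ and $S\subseteq R\circ X$, using each of the three hypotheses ($R\subseteq S$, $R$ reflexive, $S$ transitive) at a different stage. No incidence-matrix machinery is needed here; the argument is a straightforward set-theoretic case analysis on the two pieces $\Delta$ and $S\setminus R$ that make up $X$.

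For the inclusion $R\circ X\subseteq S$, I would take $(a,b)\in R\circ X$ and pick a witness $c\in I$ with $(a,c)\in R$ and $(c,b)\in X$. The hypothesis $R\subseteq S$ gives $(a,c)\in S$. Now I split on which component of $X$ contains $(c,b)$: if $(c,b)\in\Delta$ then $b=c$ and $(a,b)=(a,c)\in R\subseteq S$; if instead $(c,b)\in S\setminus R$ then $(c,b)\in S$, and transitivity of $S$ applied to $(a,c),(c,b)\in S$ yields $(a,b)\in S$. This is the only place transitivity of $S$ is used.

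For the reverse inclusion $S\subseteq R\circ X$, I would take $(a,b)\in S$ and again split into two cases according to whether $(a,b)\in R$ or $(a,b)\in S\setminus R$. In the first case, reflexivity of $R$ supplies $(b,b)\in R$, and since $(b,b)\in\Delta\subseteq X$, the pair $(a,b)=(a,b)\circ(b,b)$ lies in $R\circ X$ via the intermediate element $b$. In the second case, reflexivity of $R$ gives $(a,a)\in R$, and by definition $(a,b)\in S\setminus R\subseteq X$, so $(a,b)\in R\circ X$ via the intermediate element $a$. Thus every element of $S$ is reached.

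The whole argument is essentially routine once the correct case split is set up; there is no genuine obstacle. The only point requiring mild care is to notice that reflexivity of $R$ is needed twice (once for each case in the second inclusion), and to be sure to invoke transitivity of $S$ exclusively in the $(c,b)\in S\setminus R$ branch of the first inclusion, since the $\Delta$ branch needs only $R\subseteq S$.
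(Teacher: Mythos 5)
Your proof is correct and follows essentially the same route as the paper: both inclusions are verified by the identical case split on the two components $\Delta$ and $S\setminus R$ of $X$, with $R\subseteq S$ and transitivity of $S$ used for $R\circ X\subseteq S$ and reflexivity of $R$ for the converse. The only cosmetic remark is that in the case $(a,b)\in R$ of the reverse inclusion you do not actually need reflexivity of $R$ (the witness $(b,b)$ already lies in $\Delta\subseteq X$); reflexivity is genuinely needed only in the case $(a,b)\notin R$.
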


\begin{proof}
Let $(a,b)\in R\circ\big(\Delta\cup(S\setminus R)\big)$. Then there exists some $c\in I$ with $(a,c)\in R$ and $(c,b)\in\Delta\cup(S\setminus R)$. If $(c,b)\in\Delta$ then $(a,b)=(a,c)\in R\subseteq S$. If $(c,b)\in S\setminus R$ then $(a,c)\in R\subseteq S$ and $(c,b)\in S$ and hence $(a,b)\in S$ according to the transitivity of $S$. This shows $R\circ\big(\Delta\cup(S\setminus R)\big)\subseteq S$. Conversely, assume $(a,b)\in S$. If $(a,b)\in R$ then $(a,b)\in R$ and $(b,b)\in\Delta\cup(S\setminus R)$ and hence $(a,b)\in R\circ\big(\Delta\cup(S\setminus R)\big)$. If $(a,b)\notin R$ then $(a,a)\in R$ according to the reflexivity of $R$ and $(a,b)\in\Delta\cup(S\setminus R)$ and hence $(a,b)\in R\circ\big(\Delta\cup(S\setminus R)\big)$. This shows $S\subseteq R\circ\big(\Delta\cup(S\setminus R)\big)$ completing the proof of the lemma.
\end{proof}

Let $[a_{ij}]$ be an $I\times I$-matrix. Put
\begin{align*}
                    \vec a_k & :=(a_{ik})_{i\in I}\text{ for all }k\in I, \\
       \max_{k\in J}\vec a_k & :=(\max_{k\in J}a_{ik})_{i\in I}\text{ for all }J\subseteq I.
\end{align*}
(We use the convention $\max\limits_{k\in\emptyset}\vec a_k:=(0)_{i\in I}$.)

We can formulate and prove a general result on solving the equation $R\circ X=S$ as follows.

\begin{theorem}\label{th3}
Let $R,S\subseteq I\times I$ and put $M_R=[a_{ij}]$, $M_S=[b_{ij}]$ and $A_i:=\{j\in I\mid a_{ij}=1\}$. Then the following are equivalent:
\begin{enumerate}[{\rm(i)}]
\item The equation $R\circ X=S$ has a solution,
\item For every $k\in I$ there exists some subset $X_k$ of $I$ such that $\max\limits_{l\in X_k}\vec a_l=\vec b_k$ for all $k\in I$. In this case $X=\{(i,j)\mid j\in I,i\in X_j\}$. All solutions can be obtained in this way.
\item For every $k\in I$ there exists some subset $X_k$ of $I$ such that for all $i,k\in I$ we have $A_i\cap X_k=\emptyset$ if and only if $b_{ik}=0$.
\end{enumerate}
\end{theorem}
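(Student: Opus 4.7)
The plan is to reduce everything to Proposition~\ref{prop1} and then translate the resulting matrix identity column by column.

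First I would replace the relational equation $R\circ X=S$ by the equivalent matrix equation $M_R\odot M_X=M_S$ using Proposition~\ref{prop1}. Writing $M_X=[x_{ij}]$, I would associate to any candidate $X$ the family of column supports
\[
X_k:=\{l\in I\mid x_{lk}=1\}\qquad(k\in I),
\]
which is clearly a bijective correspondence between relations $X\subseteq I\times I$ and families $(X_k)_{k\in I}$ of subsets of $I$, inverted by $X=\{(i,j)\mid j\in I,\ i\in X_j\}$. So the ``all solutions are obtained in this way'' claim is automatic once the equivalence is established.

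Next I would compute for fixed $i,k\in I$
\[
(M_R\odot M_X)_{ik}=\max_{l\in I}a_{il}x_{lk}=\max_{l\in X_k}a_{il},
\]
the last equality since the factors $a_{il}x_{lk}$ that can be nonzero are exactly those with $l\in X_k$. Reading this as the $i$-th entry of the column vector $\max_{l\in X_k}\vec a_l$, the matrix equation $M_R\odot M_X=M_S$ becomes
\[
\max_{l\in X_k}\vec a_l=\vec b_k\qquad\text{for all }k\in I,
\]
which is precisely condition (ii). This gives (i) $\Leftrightarrow$ (ii).

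For (ii) $\Leftrightarrow$ (iii) I would unpack the previous identity coordinatewise: for all $i,k\in I$,
\[
\max_{l\in X_k}a_{il}=b_{ik}.
\]
Since these are $\{0,1\}$-values, this splits into the two cases $b_{ik}=1$ (there exists $l\in X_k$ with $a_{il}=1$, i.e.\ $l\in A_i\cap X_k$) and $b_{ik}=0$ (no $l\in X_k$ satisfies $a_{il}=1$, i.e.\ $A_i\cap X_k=\emptyset$). Both together are exactly ``$A_i\cap X_k=\emptyset$ if and only if $b_{ik}=0$'', which is condition (iii). The only step requiring any care is this final bookkeeping between the columns $\vec a_l$ and the rows $A_i$ of $M_R$; I expect no genuine obstacle, as the argument is a direct translation through the definition of $\odot$.
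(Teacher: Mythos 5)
Your proposal is correct and follows essentially the same route as the paper: convert to the matrix equation via Proposition~\ref{prop1}, identify $X_k$ with the support of the $k$-th column of $M_X$, and read off the equivalences coordinatewise (the paper merely proves (i)$\Leftrightarrow$(iii) directly rather than (ii)$\Leftrightarrow$(iii), which is an immaterial difference). The one convention you rely on implicitly, $\max_{l\in\emptyset}\vec a_l=(0)_{i\in I}$, is stated in the paper just before the theorem, so your argument is complete.
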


\begin{proof}
Let $X\subseteq I\times I$ and put $M_X=[x_{ij}]$. \\
(i) $\Leftrightarrow$ (ii): \\
Put $X_k:=\{j\in I\mid x_{jk}=1\}$ for all $k\in I$. Then the following are equivalent:
\begin{align*}
                 R\circ X & =S, \\
             M_R\odot M_X & =M_S, \\
\max_{j\in I}a_{ij}x_{jk} & =b_{ik}\text{ for all }i,k\in I, \\
    \max_{j\in X_k}a_{ij} & =b_{ik}\text{ for all }i,k\in I, \\
  \max_{j\in X_k}\vec a_j & =\vec b_k\text{ for all }k\in I.
\end{align*}
(i) $\Rightarrow$ (iii): \\
Let $X$ be a solution of the equation $R\circ X=S$ and put $X_k:=\{j\in I\mid x_{jk}=1\}$ for all $k\in I$. Then for all $i,k\in I$ the following are equivalent:
\begin{align*}
              A_i\cap X_k & \neq\emptyset, \\
\text{there exists some } & j\in A_i\cap X_k, \\
\text{there exists some } & j\in I\text{ satisfying }a_{ij}=x_{jk}=1, \\
\max_{j\in I}a_{ij}x_{jk} & =1, \\
                   b_{ik} & =1.
\end{align*}
(iii) $\Rightarrow$ (i): \\
Put
\[
x_{ij}:=\left\{
\begin{array}{ll}
1 & \text{if }i\in X_j \\
0 & \text{otherwise}
\end{array}
\right.
\]
for all $i,j\in I$. Then for all $i,k\in I$ we have
\[
\max_{j\in I}a_{ij}x_{jk}=\max_{j\in A_i\cap X_k}1=b_{ik}.
\]
This shows $M_R\odot M_X=M_S$, i.e.\ $R\circ X=S$. 
\end{proof}

Now we will investigate when the incidence matrix $A$ of a binary relation is ``invertible'', it means that there exists an incidence matrix $B$ satisfying $A\odot B=B\odot A=E$ where $E:=[\delta_{ij}]$. For $B$ we will also use the notation $A^{-1}$. (Note that because of the associativity of $\odot$ the inverse, if it exists, is unique.)

\begin{proposition}\label{prop2}
Let $n$ be a positive integer, put $I:=\{1,\ldots,n\}$, let $A=[a_{ij}]\in\{0,1\}^{n\times n}$ and put $E:=[\delta_{ij}]\in\{0,1\}^{n\times n}$. Then the following are equivalent:
\begin{enumerate}[{\rm(i)}]
\item There exists some $B\in\{0,1\}^{n\times n}$ with $A\odot B=B\odot A=E$.
\item There exists some bijection $f\colon I\rightarrow I$ satisfying $a_{ij}=\delta_{j,f(i)}$ for all $i,j\in I$.
\end{enumerate}
\end{proposition}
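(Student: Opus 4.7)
The plan is to translate the matrix condition back into a statement about relations via Proposition~\ref{prop1}, and then show that the ``invertible'' relations are precisely the graphs of permutations.

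The direction (ii) $\Rightarrow$ (i) should be immediate: if $a_{ij}=\delta_{j,f(i)}$ for a bijection $f$, then $A=M_R$ for the functional relation $R=\{(i,f(i))\mid i\in I\}$, which has a relational inverse $R^{-1}=\{(f(i),i)\mid i\in I\}$ satisfying $R\circ R^{-1}=R^{-1}\circ R=\Delta$. By Proposition~\ref{prop1} the matrix $B:=M_{R^{-1}}$ then verifies $A\odot B=B\odot A=E$.

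The work is in (i) $\Rightarrow$ (ii). Given $B$ with $A\odot B=B\odot A=E$, let $R,S\subseteq I\times I$ be the relations with $M_R=A$ and $M_S=B$; by Proposition~\ref{prop1} this is equivalent to $R\circ S=S\circ R=\Delta$. I must show that $R$ is the graph of a bijection $f\colon I\to I$, i.e.\ every row and every column of $A$ has exactly one entry equal to $1$. Existence in each row follows from $(i,i)\in R\circ S$, which yields some $j$ with $(i,j)\in R$; existence in each column follows symmetrically from $(i,i)\in S\circ R$.

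The main obstacle, and the one nontrivial step, is uniqueness: I need to rule out two distinct $1$'s in a row or column of $A$. Suppose $(i,j_1),(i,j_2)\in R$ with $j_1\neq j_2$. From $R\circ S\subseteq\Delta$, any $(j_1,\ell)\in S$ forces $(i,\ell)\in\Delta$, hence $\ell=i$; similarly any pair starting at $j_2$ in $S$ must end at $i$. Now applying $S\circ R=\Delta$ at the diagonal entry $(j_1,j_1)$ produces some $k$ with $(j_1,k)\in S$ and $(k,j_1)\in R$, forcing $k=i$, so $(j_1,i)\in S$. Combining $(j_1,i)\in S$ with $(i,j_2)\in R$ yields $(j_1,j_2)\in S\circ R=\Delta$, contradicting $j_1\neq j_2$. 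An entirely analogous argument (swapping the roles of $R\circ S$ and $S\circ R$) rules out two $1$'s in a single column of $A$. Hence $R$ is the graph of a function $f$ whose inverse is also a function, i.e.\ $f$ is a bijection, and $a_{ij}=\delta_{j,f(i)}$ as required.
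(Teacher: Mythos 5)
Your proof is correct, but the key implication (i) $\Rightarrow$ (ii) is argued quite differently from the paper. The paper views $B$ as a solution of $A\odot X=E$ and invokes Theorem~\ref{th3}: for each $k$ the columns of $A$ indexed by some set $I_k$ have maximum $\vec e_k$, which forces one of those columns to equal $\vec e_k$; the resulting map $k\mapsto f(k)$ is injective, and \emph{finiteness of $I$} is then used to conclude it is bijective. Notably, the paper's argument uses only the single equation $A\odot B=E$. You instead translate both equations into $R\circ S=S\circ R=\Delta$ via Proposition~\ref{prop1} and give a purely relational argument: existence of a $1$ in each row and each column comes from the two diagonal conditions, and your uniqueness argument (the three-step chase producing $(j_1,j_2)\in S\circ R=\Delta$) is sound, as is its transposed version for columns. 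What your route buys is that it nowhere uses finiteness, so you in fact establish (i) $\Rightarrow$ (ii) for an arbitrary index set $I$, strengthening the paper's closing remark that only (ii) $\Rightarrow$ (i) extends to infinite $I$; what the paper's route buys is the observation that, in the finite case, a one-sided inverse already forces $A$ to be a permutation matrix. The easy direction (ii) $\Rightarrow$ (i) is essentially the same computation in both, phrased with relations rather than with matrix entries.
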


\begin{proof}
$\text{}$ \\
(i) $\Rightarrow$ (ii): \\
For $j\in I$ let $\vec a_j$ and $\vec e_j$ denote the $j$-th column vector of $A$ and $E$, respectively. Moreover, let $k\in I$. Since $B$ is a solution of the equation $A\odot X=E$, according to Theorem~\ref{th3} there exists some subset $I_k$ of $I$ such that $\max\limits_{l\in I_k}\vec a_l=\vec e_k$. Hence there exists some $f(k)\in I_k$ with $\vec a_{f(k)}=\vec e_k$. Clearly, $f\colon I\rightarrow I$ is injective and thus bijective and we obtain $\vec a_j=\vec e_{f^{-1}(j)}$ for all $j\in I$, i.e.\ $a_{ij}=\delta_{i,f^{-1}(j)}=\delta_{j,f(i)}$ for all $i,j\in I$. \\
(ii) $\Rightarrow$ (i): \\
If $B=[b_{ij}]:=[\delta_{j,f^{-1}(i)}]$ then
\begin{align*}
\max_{k\in I}a_{ik}b_{kj} & =\max_{k\in I}\delta_{k,f(i)}\delta_{j,f^{-1}(k)}=\delta_{f(i),f(j)}=\delta_{ij}, \\
\max_{k\in I}b_{ik}a_{kj} & =\max_{k\in I}\delta_{k,f^{-1}(i)}\delta_{j,f(k)}=\delta_{f^{-1}(i),f^{-1}(j)}=\delta_{ij}
\end{align*}
showing $A\odot B=B\odot A=E$.
\end{proof}

Note that condition (ii) means that every row and every column of $A$ contains exactly one $1$ and that the implication (ii) $\Rightarrow$ (i) also holds for infinite $I$.

\begin{remark}\label{rem1}
Let $R$ and $S$ be binary relations on a set $I$ such that for the incidence matrix $M_R$ of $R$ there exists a bijection $f\colon I\rightarrow I$ as described in Theorem~\ref{th2}. Put $E:=[\delta_{ij}]\in\{0,1\}^{I\times I}$. It is easy to check that $A\odot E=E\odot A=A$ for every $A\in\{0,1\}^{I\times I}$. From Proposition~\ref{prop2} we obtain $M_R^{-1}=[\delta_{j,f^{-1}(i)}]$. Now the following are equivalent:
\begin{align*}
    R\circ X & =S, \\
M_R\odot M_X & =M_S, \\
         M_X & =M_R^{-1}\odot M_S
\end{align*}
and hence $x_{ij}=\max\limits_{k\in I}\delta_{k,f^{-1}(i)}b_{kj}=b_{f^{-1}(i),j}$ for all $i,j\in I$. Note that here we used associativity of $\odot$.
\end{remark}

The next theorem characterizes solvability of the equation $R\circ X=S$ and also characterizes the corresponding solutions. From this theorem we will derive an algorithm for computing all solutions.

\begin{theorem}\label{th4}
Let $R,S\subseteq I\times I$, $M_R=[a_{ij}]$ and $M_S=[b_{ij}]$ and put
\begin{align*}
A_i & :=\{j\in I\mid a_{ij}=1\}, \\
B_k & :=\{i\in I\mid b_{ik}=0\}, \\
C_k & :=\bigcup_{l\in B_k}A_l
\end{align*}
for all $i,k\in I$.
\begin{enumerate}[{\rm(i)}]
\item The equation $R\circ X=S$ is solvable if and only if $A_i\setminus C_k\neq\emptyset$ for all $k\in I$ and all $i\in I\setminus B_k$.
\item $X\subseteq I\times I$ with $M_X=[x_{ij}]$ is a solution of the equation $R\circ X=S$ if and only if the following hold:
\begin{enumerate}[{\rm(a)}]
\item $x_{jk}=0$ for all $k\in I$ and all $j\in C_k$,
\item for every $k\in I$ and $i\in I\setminus B_k$ there exists some $j\in A_i\setminus C_k$ with $x_{jk}=1$.
\end{enumerate}
\end{enumerate}
\end{theorem}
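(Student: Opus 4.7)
The plan is to translate the relational equation $R\circ X=S$ via Proposition~\ref{prop1} into the pointwise system
\[
\max_{j\in I}a_{ij}x_{jk}=b_{ik}\qquad(i,k\in I),
\]
and then to analyze each pair $(i,k)$ by cases on the value of $b_{ik}$. The rest is bookkeeping that matches the two possible values of the maximum to the combinatorial descriptions of $A_i$, $B_k$, and $C_k$.

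First I would fix $k\in I$ and split the index $i$ according to whether $i\in B_k$. When $i\in B_k$ (i.e.\ $b_{ik}=0$), the system forces $a_{ij}x_{jk}=0$ for every $j$, equivalently $x_{jk}=0$ for every $j\in A_i$. Taking the union over $i\in B_k$ collapses this to the single statement $x_{jk}=0$ for every $j\in C_k=\bigcup_{l\in B_k}A_l$, which is exactly condition~(a). When $i\in I\setminus B_k$ (i.e.\ $b_{ik}=1$), the system demands the existence of some $j\in A_i$ with $x_{jk}=1$; once condition (a) is imposed, such a $j$ is excluded from $C_k$ and must therefore lie in $A_i\setminus C_k$, giving condition~(b). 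Running this case split backwards shows that whenever (a) and (b) both hold one recovers $\max_{j\in I}a_{ij}x_{jk}=b_{ik}$ for all $i,k$, which settles part~(ii).

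Part~(i) is then a direct consequence. The forward direction is immediate, since any solution witnesses (b), forcing each set $A_i\setminus C_k$ with $i\in I\setminus B_k$ to be non-empty. For the converse I would exhibit the canonical solution
\[
x_{jk}:=\begin{cases}1 & \text{if }j\in I\setminus C_k,\\ 0 & \text{if }j\in C_k,\end{cases}
\]
and verify (a) and (b) directly. The key observation is that $A_i\subseteq C_k$ whenever $i\in B_k$ (so every term $a_{ij}x_{jk}$ with $i\in B_k$ vanishes by construction), whereas the hypothesis $A_i\setminus C_k\neq\emptyset$ for $i\in I\setminus B_k$ supplies a $j$ with $a_{ij}=x_{jk}=1$, giving the required maximum equal to $1$.

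No serious obstacle is expected. The only mildly delicate point is to avoid circularity when deducing (b) from the equation: one must invoke (a) before asserting that the witnessing $j$ lies in $A_i\setminus C_k$ rather than merely in $A_i$. Once (a) and (b) are recognised as the correct pointwise rephrasing of the two possible values of $\max_{j\in I}a_{ij}x_{jk}$, both assertions of the theorem fall out at the same time.
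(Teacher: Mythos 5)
Your proposal is correct and follows essentially the same route as the paper: rewrite $R\circ X=S$ as the pointwise system $\max_{j\in I}a_{ij}x_{jk}=b_{ik}$ and split on whether $i\in B_k$, which yields conditions (a) and (b) exactly as in the paper's proof. Your explicit canonical solution $x_{jk}=1$ iff $j\notin C_k$ for the converse of part (i) is a welcome addition, since the paper leaves that direction implicit (it is only realized later in the algorithm), but it does not change the underlying argument.
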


\begin{proof}
Let $X\subseteq I\times I$ and $M_X=[x_{ij}]$. Then the following are equivalent:
\begin{align*}
                 R\circ X & =S, \\
\max_{j\in I}a_{ij}x_{jk} & =b_{ik}\text{ for all }i,k\in I, \\
    \max_{j\in A_i}x_{jk} & =b_{ik}\text{ for all }i,k\in I.
\end{align*}
For $k\in I$ the following are equivalent:
\begin{align*}
\max_{j\in A_i}x_{jk} & =b_{ik}\text{ for all }i\in B_k, \\
\max_{j\in A_i}x_{jk} & =0\text{ for all }i\in B_k, \\
               x_{jk} & =0\text{ for all }i\in B_k\text{ and all }j\in A_i, \\
               x_{jk} & =0\text{ for all }j\in\bigcup_{i\in B_k}A_i, \\
               x_{jk} & =0\text{ for all }j\in C_k.
\end{align*}							
For $k\in I$ and $i\in I\setminus B_k$ the following are equivalent:
\begin{align*}
                               \max_{j\in A_i}x_{jk} & =b_{ik}, \\
                               \max_{j\in A_i}x_{jk} & =1, \\
\text{there exists some }j\in A_i\text{ with }x_{jk} & =1.
\end{align*}							
\end{proof}

As mentioned above we now derive an algorithm for computing all solutions of the equation $R\circ X=S$ provided this equation is solvable and $I$ is finite. This algorithm consists of the following three steps (let $A_i$, $B_k$ and $C_k$ be defined as in Theorem~\ref{th4}):
\begin{enumerate}[(1)]
\item Put $x_{jk}:=0$ for all $k\in I$ and all $j\in C_k$.
\item For all $k\in I$ and $i\in I\setminus B_k$ choose some $j\in A_i\setminus C_k$ and put $x_{jk}:=1$.
\item Choose the remaining $x_{jk}\in\{0,1\}$ arbitrarily.
\end{enumerate}

This algorithm was already implicitly used in Example~\ref{ex1}, see Example~\ref{ex2}. In fact it is similar to the method for solving linear equations. In steps (1) and (2) the algorithm reduces the possibilities for choosing the elements of $M_X$ whereas steps (2) and (3) determine the number of solutions.

The aforementioned algorithm will be demonstrated by the following example.

\begin{example}\label{ex2}
Let us apply the algorithm to Example~\ref{ex1}. Hence we have
\[
M_R=\left(
\begin{array}{ccc}
0 & 1 & 0 \\
1 & 0 & 0 \\
0 & 1 & 1
\end{array}
\right), M_S= \left(
\begin{array}{ccc}
1 & 1 & 0 \\
0 & 0 & 0 \\
1 & 1 & 0
\end{array}
\right).
\]
We compute
\begin{align*}
& A_1=\{2\}, A_2=\{1\}, A_3=\{2,3\}, \\
& B_1=\{2\}, B_2=\{2\}, B_3=\{1,2,3\}, \\
& C_1=\{1\}, C_2=\{1\}, C_3=\{1,2,3\}, \\
& I\setminus B_1=\{1,3\}, I\setminus B_2=\{1,3\}, I\setminus B_3=\emptyset, \\
& A_1\setminus C_1=\{2\}\neq\emptyset, A_3\setminus C_1=\{2,3\}\neq\emptyset, A_1\setminus C_2=\{2\}\neq\emptyset, A_3\setminus C_2=\{2,3\}\neq\emptyset.
\end{align*}
Hence the equation $R\circ X=S$ is solvable and we obtain
\begin{align*}
& x_{11}=x_{12}=x_{13}=x_{23}=x_{33}=0, \\
& 1\in\{x_{21}\}\cap\{x_{21},x_{31}\}\cap\{x_{22}\}\cap\{x_{22},x_{32}\},
\end{align*}
i.e.\ $x_{21}=x_{22}=1$ and $x_{31},x_{32}\in\{0,1\}$. Thus we got all four solutions derived in Example~\ref{ex1}.
\end{example}

There arises the question what can be said concerning the equation $X\circ R=S$.

\begin{remark}
Since the equation $X\circ R=S$ is dual to the equation $R\circ X=S$, the investigation of the first equation does not bring new insights in the problem.
\end{remark}

Authors' addresses:

Ivan Chajda \\
Palack\'y University Olomouc \\
Faculty of Science \\
Department of Algebra and Geometry \\
17.\ listopadu 12 \\
771 46 Olomouc \\
Czech Republic \\
ivan.chajda@upol.cz

Helmut L\"anger \\
TU Wien \\
Faculty of Mathematics and Geoinformation \\
Institute of Discrete Mathematics and Geometry \\
Wiedner Hauptstra\ss e 8-10 \\
1040 Vienna \\
Austria, and \\
Palack\'y University Olomouc \\
Faculty of Science \\
Department of Algebra and Geometry \\
17.\ listopadu 12 \\
771 46 Olomouc \\
Czech Republic \\
helmut.laenger@tuwien.ac.at

\begin{thebibliography}9
\bibitem{CL13}
I.~Chajda and H.~L\"anger, Groupoids assigned to relational systems. Math.\ Bohem.\ {\bf138} (2013), 15--23.
\bibitem{CL16}
I.~Chajda and H.~L\"anger, Groupoids corresponding to relational systems.	Miskolc Math.\ Notes {\bf17} (2016), 111--118.
\bibitem{CLS}
I.~Chajda, H.~L\"anger and P.~\v Sev\v c\'ik, An algebraic approach to binary relations. Asian-Eur.\ J.\ Math.\ {\bf8} (2015), 1550017-1--1550017-13.
\bibitem{F53}
R.~Fraiss\'e, Sur certaines relations qui g\'en\'eralisent l'ordre des nombres rationnels. C.\ R.\ Acad.\ Sci.\ Paris {\bf237} (1953), 540--542.
\bibitem{F54}
R.~Fraiss\'e, Sur l'extension aux r\'elations de quelques propri\'et\'es des ordres. Ann.\ Sci.\ \'Ec.\ Norm.\ Sup\'er.\ {\bf71} (1954), 363--388.
\bibitem J
B.~J\'onsson, Universal relational systems. Math.\ Scand.\ {\bf4} (1956), 193--208.
\bibitem R
J.~Riguet, Relations binaires, fermetures, correspondances de Galois. Bull.\ Soc.\ Math.\ France {\bf76} (1948), 114--155.
\end{thebibliography}
\end{document}